\newtheorem{thm}{Theorem}[section]
\newtheorem{lem}{Lemma}[section]
\newtheorem{mthd}{Method}[section]
\theoremstyle{definition}
\newtheorem{defn}{Definition}[section]
\theoremstyle{remark}
\newtheorem{rem}{Remark}[section]
\begin{document}
	
	\title{{More on Modulus Based Iterative Method for Solving Implicit Complementarity  Problem}}
	
	\author{Bharat kumar$^{a,1}$, Deepmala$^{a,2}$ and A.K. Das $^{b,3}$\\
		\emph{\small $^{a}$Mathematics Discipline,}\\
		\emph{\small PDPM-Indian Institute of Information Technology, Design and Manufacturing,}\\
		\emph{\small Jabalpur - 482005 (MP), India}\\
		\emph{\small $^{b}$Indian Statistical Institute, 203 B.T. Road, }\\
		\emph{\small Kolkata - 700108, India}\\
		\emph{\small $^1$Email:bharatnishad.kanpu@gmail.com , $^2$Email: dmrai23@gmail.com}\\
		\emph{\small $^3$Email: akdas@isical.ac.in}}
	\date{}
	\maketitle
	
	\abstract{\noindent This article presents a class of modified new modulus-based iterative methods to process the large and sparse implicit complementarity problem (ICP).  By using two positive diagonal matrices, we formulate a fixed-point equation which is equivalent to an ICP  and  based on a fixed-point equation, an   iterative method is presented to solve the ICP.  We provide some convergence conditions for the proposed methods when the system matrix is a $P$-matrix or an $H_+$-matrix.}

	\noindent \textbf{Keywords.} Implicit complementarity problem, $H_{+}$-matrix, $P$-matrix, matrix splitting method, convergence.\\
	
	\noindent \textbf{Mathematics Subject Classification.} 90C33, 65F10, 65F50.\\
	
	
	\maketitle
	
	\section{Introduction}\label{sec1}
	The implicit complementarity problem (ICP) was introduced by Bensoussan et al. in 1973. For detail, see \cite{7}, \cite{8}.  The linear complementarity problem (LCP) is a special case of the ICP.  The ICP is used in many fields, such as engineering and economics, scientific computing, stochastic optimal control problems and convex cones. The ICP has been discussed and studied in some literature. For details, see \cite{1}, \cite{2}, \cite{3}, \cite{4}, \cite{5}, \cite{6}.\\
	Consider  matrix $A \in \mathbb{R}^{n\times n} $ and  vector $q \in \mathbb{R}^n$. The implicit complementarity problem denoted as ICP$(q, A, \psi)$ is to find the solution $ z \in \mathbb{R}^n $ to the following system:
	\begin{equation}\label{eq1}
		p(z)=Az+q\geq 0, ~~~r(z)=z-\psi(z)\geq 0, ~~~ p(z)^Tr(z)=0,
	\end{equation}
	where $\psi(z)$ is a mapping from $\mathbb{R}^n$ into $\mathbb{R}^n$ such that $r(z)$ is invertible. When we put $\psi(z)=0$ in Equation (\ref{eq1}), the ICP$(q, A, \psi)$ reduces to the linear complementarity problem (LCP). For more details on LCP and its applications, see \cite{9}, \cite{10}, \cite{35}, \cite{36}, \cite{37}, \cite{38}, \cite{39},  \cite{41}, \cite{42}, \cite{43}, \cite{44}, \cite{45}, \cite{46}.\\
	In recent decades to solve complementarity problems, many methods have been proposed by considering some equivalent problems \cite{26}, \cite{27}, \cite{28}, such as fixed point approaches \cite{11}, projection-type methods \cite{12}, smooth and nonsmooth Newton methods \cite{13}, \cite{14}, linearization methods \cite{15}, domain decomposition methods \cite{16}, matrix multisplitting methods \cite{17}, \cite{18}, \cite{19}, \cite{20}, \cite{21}, \cite{34} and inexact alternating direction methods \cite{22}, \cite{23}, \cite{24}, \cite{25}.
	In 2016, Hong and Li \cite{29}  demonstrated numerically the superiority of the modulus-based matrix splitting (MMS) method over the projection fixed-point method and the Newton method when solving the ICP$(q, A, \psi)$.
	Shilang and Liang \cite{30} presented a family of new modulus-based matrix splitting (NMMS) iteration methods to solve the ICP$(q, A, \psi)$ in 2022. The NMMS method is not only different from the inner-outer iteration methods. In addition, the NMMS method is simple and efficient in its implementation.
	Motivated by the work of Shilang and Liang, we propose a class of modified new    modulus-based iterative methods based on new matrix splitting for solving ICP$(q, A, \psi)$ which covers the NMMS method.
	
	This article is presented as follows: Some useful definitions, notations and well-known lemmas are given in Section 2 which are required for the remaining sections of this work. In Section 3, a family of  modified new modulus-based matrix splitting methods is constructed using the new equivalent fixed-point form of the ICP$(q, A, \psi)$. In Section 4, we establish some convergence criteria for the proposed methods. Section 5 contains the conclusion of the article.
	\section{Preliminaries}\label{Preli}
	In this section, we provide some basic notations, definitions and lemmas that will be used throughout the  article.\\
	Let  $A=({a}_{ij})\in \mathbb{R}^{n\times n}$, $B=({b}_{ij})\in \mathbb{R}^{n\times n}$.  We use $A \geq (> ) B$ to   denote $a_{ij}\geq (>)  b_{ij}$ $\forall$ $ i,j$ and    $A^{T}$  denotes the transpose of the matrix  $A$. Let  $A=0 \in \mathbb{R}^{n \times n}$  defined as  $ a_{ij}=0 ~\forall~ i,j$  and  $\lvert A\rvert=(c_{ij})$   defined by $ c_{ij}  = \lvert {a}_{ij}\rvert$ $\forall ~i,j$.
	
	\begin{defn}\cite{31}
		Suppose  $ A=(a_{ij})\in  \mathbb{R}^{n\times n}$. Then its  comparison matrix $\langle A \rangle =(\langle a_{ij} \rangle)$  is defined by $\langle a_{ij} \rangle$ = $\lvert {a}_{ij}\rvert$ \text{if $i=j$} and $-\vert {a}_{ij}\rvert$ \text{if $i\neq j$} for $i,j = 1,2,\ldots,n$.
	\end{defn}
	\begin{defn}\label{def1}\cite{32}
		Suppose  $ A \in \mathbb{R}^{n\times n}$. Then $A$ is said to be 
		\begin{enumerate}
			\item a $Z$-matrix if all of its off-diagonal elements are nonpositive.
			\item  an $M$-matrix if $A^{-1}\geq 0$ as well as   $Z$-matrix.
			\item     an $H$-matrix, if   $\langle A \rangle$ is an $M$-matrix.
			\item   an $H_+$-matrix if it is an $H$-matrix as well as  $ {a}_{ii} > 0$ for $~i =1,2,\ldots,n$.
		\end{enumerate}
	\end{defn} 
	
	
	
	
	\begin{defn}{\cite{30}}
		Let $A=(a_{ij})\in \mathbb{R}^{n \times n}$. Then $A $ is said to be a strictly diagonally dominant (sdd) matrix  if $\lvert a_{ii}\rvert>\sum_{j\neq i}\lvert a_{ij}\rvert$ for $i,j=1,2,\ldots,n.$
	\end{defn}
	\begin{lem}\label{lem5} \cite{33}
		Let $x, y \in \mathbb{R}^n.$ Then $x\geq 0$, $y\geq 0$ and $x^Ty=0$ if and only if $x+y=\lvert x-y\rvert.$
	\end{lem}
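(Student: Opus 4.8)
The plan is to reduce the vector identity to a collection of scalar (componentwise) statements and then verify the scalar equivalence by elementary case analysis, using a squaring trick to pass between the quadratic relation $x^Ty=0$ and the absolute-value identity. Throughout I write $x=(x_i)$, $y=(y_i)$ and observe that $x+y=\lvert x-y\rvert$ means $x_i+y_i=\lvert x_i-y_i\rvert$ for every $i$, while $x^Ty=0$ is the single scalar equation $\sum_i x_iy_i=0$.

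For the forward implication, assume $x\geq 0$, $y\geq 0$ and $x^Ty=0$. Since each summand $x_iy_i$ is nonnegative under the sign constraints, the vanishing of their sum forces $x_iy_i=0$ for every $i$; that is, at least one of $x_i,y_i$ is zero in each coordinate. I would then split into the two cases $x_i=0$ and $y_i=0$. In the first, $x_i+y_i=y_i$ and $\lvert x_i-y_i\rvert=\lvert -y_i\rvert=y_i$ because $y_i\geq 0$; in the second, $x_i+y_i=x_i=\lvert x_i\rvert=\lvert x_i-y_i\rvert$ because $x_i\geq 0$. Either way the scalar identity holds, and assembling over all $i$ gives $x+y=\lvert x-y\rvert$.

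For the converse, assume $x+y=\lvert x-y\rvert$, read coordinatewise. Squaring the scalar relation $x_i+y_i=\lvert x_i-y_i\rvert$ yields $(x_i+y_i)^2=(x_i-y_i)^2$, and expanding cancels the squares to leave $4x_iy_i=0$, hence $x_iy_i=0$ in every coordinate; summing immediately delivers $x^Ty=0$. To recover the sign conditions I again split into cases: if $y_i=0$ the identity reads $x_i=\lvert x_i\rvert$, forcing $x_i\geq 0$, while if $x_i=0$ it reads $y_i=\lvert y_i\rvert$, forcing $y_i\geq 0$. Since $x_iy_i=0$ guarantees that one of these cases always applies, we obtain $x_i\geq 0$ and $y_i\geq 0$ for all $i$, i.e. $x\geq 0$ and $y\geq 0$.

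The only point that needs care is the mismatch between the single quadratic constraint $x^Ty=0$ and the $n$ scalar identities $x_i+y_i=\lvert x_i-y_i\rvert$: in the forward direction the nonnegativity of $x$ and $y$ is exactly what upgrades ``the sum of the $x_iy_i$ vanishes'' to ``each $x_iy_i$ vanishes,'' whereas in the converse the squaring argument produces the coordinatewise products directly, without needing the sign information in advance. Keeping track of this asymmetry is the main subtlety; the remaining steps are routine sign bookkeeping.
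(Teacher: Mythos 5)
Your proof is correct and complete: the forward direction correctly upgrades $\sum_i x_iy_i=0$ to $x_iy_i=0$ coordinatewise using the sign constraints, and in the converse the squaring trick legitimately extracts $x_iy_i=0$ before returning to the unsquared identity $x_i+y_i=\lvert x_i-y_i\rvert$ to recover both sign conditions in each coordinate. There is nothing in the paper to compare against: the lemma is stated with a citation to reference [33] and no proof is given in the text, so your elementary componentwise case analysis simply supplies the standard argument that the paper omits.
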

	\begin{lem} \cite{30}\label{2.4}
		Let $A\in \mathbb{R}^{n \times n}$ be a sdd matrix. Then $$\|A^{-1}E\|_\infty \leq \max_{1\leq i \leq n}\frac{(\lvert E \rvert e)_{i}}{(\langle A \rangle e)_{i}},~~ \mbox{for all}~ E \in \mathbb{R}^{n \times n},$$ where $e=(1,1,\ldots,1)^T$.
	\end{lem}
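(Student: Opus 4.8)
The plan is to combine the variational characterization of the induced matrix $\infty$-norm, namely $\|M\|_\infty = \sup_{\|x\|_\infty \le 1}\|Mx\|_\infty$, with the classical ``largest-component'' argument for diagonally dominant linear systems. First I would record the two consequences of the sdd hypothesis that make the statement meaningful. Since $A$ is sdd, $(\langle A\rangle e)_i = |a_{ii}| - \sum_{j\neq i}|a_{ij}| > 0$ for every $i$, so each denominator on the right-hand side is strictly positive and the quantity $\tau := \max_{1\le i\le n}(\lvert E\rvert e)_i/(\langle A\rangle e)_i$ is well defined and finite. Moreover, a strictly diagonally dominant matrix is nonsingular, so $A^{-1}$ exists and $A^{-1}E$ makes sense.

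Next, fix an arbitrary $x\in\mathbb{R}^n$ with $\|x\|_\infty\le 1$ and set $y = A^{-1}Ex$, so that $Ay = Ex$. Let $k$ be an index at which $|y_k| = \|y\|_\infty$. Writing the $k$-th row of $Ay=Ex$ as $a_{kk}y_k = (Ex)_k - \sum_{j\neq k}a_{kj}y_j$, taking absolute values, and using $|y_j|\le |y_k|$ for all $j$, I obtain $|a_{kk}|\,|y_k| \le |(Ex)_k| + \big(\sum_{j\neq k}|a_{kj}|\big)|y_k|$. Rearranging gives the key inequality
\[
(\langle A\rangle e)_k\,|y_k| \;=\; \Big(|a_{kk}| - \sum_{j\neq k}|a_{kj}|\Big)|y_k| \;\le\; |(Ex)_k|.
\]

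To finish I would bound the right-hand side using $\|x\|_\infty\le 1$: indeed $|(Ex)_k| = \big|\sum_j e_{kj}x_j\big| \le \sum_j |e_{kj}| = (\lvert E\rvert e)_k$. Dividing by the strictly positive number $(\langle A\rangle e)_k$ yields $\|y\|_\infty = |y_k| \le (\lvert E\rvert e)_k/(\langle A\rangle e)_k \le \tau$. Since this holds for every $x$ in the unit ball of $\|\cdot\|_\infty$, taking the supremum over such $x$ gives $\|A^{-1}E\|_\infty \le \tau$, which is exactly the claimed bound.

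The only subtle point, and the place where the hypothesis is genuinely used, is the positivity $(\langle A\rangle e)_k>0$: this is what both validates the final division and prevents the key inequality from degenerating. It is precisely strict (as opposed to weak) diagonal dominance that guarantees this at the critical index $k$; everything else is a routine estimate. I therefore expect no real obstacle beyond keeping track of the index of the maximal component and being careful that the bound on $|(Ex)_k|$ is taken row-wise so that the ratio assembled at the end matches the definition of $\tau$.
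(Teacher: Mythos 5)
Your proof is correct and complete: the positivity $(\langle A\rangle e)_i>0$ from strict diagonal dominance, the maximal-component argument at the row $k$ of $Ay=Ex$ giving $(\langle A\rangle e)_k\,\|y\|_\infty\le |(Ex)_k|\le (\lvert E\rvert e)_k$, and the supremum over the unit ball together yield the bound. Note, however, that the paper offers no proof to compare against: the lemma is imported verbatim from \cite{30} as a known result, so your write-up actually supplies the missing argument; it is the standard largest-component proof, which for $E=I$ reduces to Varah's classical bound $\|A^{-1}\|_\infty\le\max_{1\le i\le n}\,\big((\langle A\rangle e)_i\big)^{-1}$.
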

	\begin{lem}\cite{30}
		Let $A \in \mathbb{R}^{n \times n}$ be  a nonsingular $M$-matrix, then there exists a positive diagonal matrix $U$ such that $AU$ is an sdd matrix.
	\end{lem}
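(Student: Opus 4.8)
The plan is to produce the diagonal scaling explicitly from the defining property $A^{-1}\ge 0$ of a nonsingular $M$-matrix, and then verify strict diagonal dominance of $AU$ by a direct row-by-row computation. First I would record the two structural facts available from Definition \ref{def1}: since $A$ is a $Z$-matrix, its off-diagonal entries satisfy $a_{ij}\le 0$ for $i\neq j$; and since it is an $M$-matrix, $A^{-1}\ge 0$. The positivity of the diagonal entries need not be assumed separately, as it will fall out of the main computation.

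The construction I would use is to let $e=(1,\ldots,1)^T$ and set $u=A^{-1}e$, then take $U=\mathrm{diag}(u_1,\ldots,u_n)$. Since $A^{-1}\ge 0$ and $e>0$ we immediately get $u\ge 0$. The delicate point is to upgrade this to $u>0$: if some component $u_i$ vanished, then the $i$-th row of $A^{-1}$ would be a nonnegative vector summing to zero, hence the zero row, forcing $A^{-1}$ and therefore $A$ to be singular, contradicting the hypothesis. Thus $u>0$ and $U$ is a genuine positive diagonal matrix.

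For the verification, I would write $B=AU=(a_{ij}u_j)$ and begin from the identity $Au=AA^{-1}e=e$, so that $(Au)_i=1$ for every $i$. Because $a_{ij}\le 0$ and $u_j>0$ for $j\neq i$, the off-diagonal magnitudes are $\lvert b_{ij}\rvert=-a_{ij}u_j$, and hence
$$a_{ii}u_i=(Au)_i-\sum_{j\neq i}a_{ij}u_j=1+\sum_{j\neq i}\lvert b_{ij}\rvert\ge 1>0.$$
In particular $b_{ii}=a_{ii}u_i>0$, so $\lvert b_{ii}\rvert=a_{ii}u_i$, and rearranging the same identity yields
$$\lvert b_{ii}\rvert-\sum_{j\neq i}\lvert b_{ij}\rvert=a_{ii}u_i-\sum_{j\neq i}(-a_{ij})u_j=(Au)_i=1>0,$$
which is exactly the defining inequality for $AU$ to be strictly diagonally dominant.

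The main obstacle is the strict positivity of $u=A^{-1}e$; everything else is routine once the correct scaling vector is identified. I expect the nonsingularity argument—that a vanishing component of $u$ would force a zero row in $A^{-1}$—to be the crux, and I would take care to invoke nonsingularity of $A$ precisely at that step rather than relying on any unstated irreducibility assumption. The remaining calculation is purely a matter of translating the sdd condition into the componentwise statement $(Au)_i>0$, which the choice $u=A^{-1}e$ makes trivially true.
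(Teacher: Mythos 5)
Your proof is correct, and since the paper states this lemma without proof (citing \cite{30}), there is no in-paper argument to diverge from; your construction $u=A^{-1}e$, $U=\mathrm{diag}(u)$ is precisely the classical proof of this fact. Each step checks out against the paper's own definitions: strict positivity of $u$ follows as you say (a zero component would force a zero row of $A^{-1}\ge 0$), and the identity $(Au)_i=1$ together with the $Z$-matrix sign pattern gives $\lvert b_{ii}\rvert-\sum_{j\neq i}\lvert b_{ij}\rvert=1>0$, which is the paper's sdd condition, with the positivity of $a_{ii}$ indeed falling out rather than needing to be assumed.
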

	\section{Main results}\label{sec2}
	In this section, we present a class of modified new modulus-based splitting methods for solving ICP$(q, A,\psi)$.   First, we construct a new equivalent expression of the ICP$(q, A,\psi)$ using the Lemma $\ref{lem5}$. The details are as follows.
	\begin{thm}\label{thm0}
		Let $A \in \mathbb{R}^{n \times n}$ and $q \in \mathbb{R}^{n}$. Suppose $\phi_{1}$, $\phi_{2} \in \mathbb{R}^{n \times n}$  are two positive diagonal matrices. Then the ICP$(q, A,\psi)$ is equivalent to 
		\begin{equation} \label{eq2}
			(\phi_1 A +\phi_{2})z=\lvert{(\phi_1 A-\phi_{2})z+\phi_{1} q+\phi_{2}\psi(z)}\rvert-\phi_{1} q+\phi_{2}\psi(z).
		\end{equation}
	\end{thm}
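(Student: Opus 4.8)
The plan is to invoke Lemma \ref{lem5} after rescaling the two complementarity vectors by the positive diagonal matrices $\phi_1$ and $\phi_2$. Concretely, I would set $x = \phi_1 p(z) = \phi_1(Az+q)$ and $y = \phi_2 r(z) = \phi_2(z-\psi(z))$, and first argue that the three defining conditions of ICP$(q,A,\psi)$ in \eqref{eq1} are equivalent to the three hypotheses $x \geq 0$, $y \geq 0$, $x^T y = 0$ that Lemma \ref{lem5} requires. Once this equivalence of triples is secured, the lemma converts the problem into the single modulus equation $x+y = \lvert x-y\rvert$, and the rest is algebraic rearrangement.

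For the sign conditions the equivalence is immediate: since $\phi_1$ and $\phi_2$ are positive diagonal matrices, reading componentwise gives $x = \phi_1 p(z) \geq 0 \iff p(z)\geq 0$ and $y = \phi_2 r(z)\geq 0 \iff r(z)\geq 0$. The step that needs a little care is the orthogonality condition, and I expect this to be the main (though modest) obstacle. Here I would expand $x^T y = \sum_{i=1}^{n} (\phi_1)_{ii}(\phi_2)_{ii}\, p_i(z)\, r_i(z)$ and observe that, because every coefficient $(\phi_1)_{ii}(\phi_2)_{ii}$ is strictly positive and, under the sign conditions, each summand $p_i(z)r_i(z)$ is nonnegative, the sum vanishes if and only if each product $p_i(z)r_i(z)$ vanishes; this is precisely $p(z)^T r(z)=0$. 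This is the only point at which strict positivity of the diagonal entries is genuinely used, since it is what lets me pass between the weighted inner product $x^Ty$ and the unweighted one $p(z)^T r(z)$.

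With the two triples shown to be equivalent, Lemma \ref{lem5} yields that ICP$(q,A,\psi)$ holds if and only if $x+y = \lvert x-y\rvert$. It then remains to substitute $x=\phi_1(Az+q)$ and $y=\phi_2(z-\psi(z))$ and simplify. The left-hand side collects to $(\phi_1 A+\phi_2)z + \phi_1 q - \phi_2\psi(z)$, while the argument of the modulus on the right becomes $(\phi_1 A-\phi_2)z + \phi_1 q + \phi_2\psi(z)$. Transposing the terms $\phi_1 q - \phi_2\psi(z)$ to the right-hand side produces exactly the fixed-point equation \eqref{eq2}. Since each manipulation in this chain is reversible and Lemma \ref{lem5} is itself a biconditional, the argument establishes the claimed equivalence in both directions simultaneously, completing the proof.
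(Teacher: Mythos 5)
Your proof is correct and follows essentially the same route as the paper: set $x=\phi_1(Az+q)$, $y=\phi_2(z-\psi(z))$, apply Lemma \ref{lem5}, and rearrange to obtain \eqref{eq2}. If anything, you are more careful than the paper, which asserts the equivalence of the rescaled complementarity conditions without spelling out the weighted-inner-product argument for $x^Ty=0\iff p(z)^Tr(z)=0$ that you supply.
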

	\begin{proof}
		The ICP$(q, A,\psi)$ can be rewritten as 
		\begin{equation*}
			\phi_{1}(Az+q)\geq 0, ~~~~~~\phi_{2}(z-\psi(z))\geq 0,~~~~~~~ (\phi_{2}(z-\psi(z))^T(\phi_{1}(Az+q))=0.
		\end{equation*}
		Let $x=\phi_{1}(Az+q)$ and $y=\phi_{2}(z-\psi(z))$. Then by using the Lemma \ref{lem5}, we write 
		\begin{equation*}
			\phi_{1}(Az+q)+\phi_{2}(z-\psi(z)) =\lvert \phi_{1}(Az+q)-\phi_{2}(z-\psi(z))\rvert.
		\end{equation*}
		This implies that
		\begin{equation*}
			(\phi_1 A +\phi_{2})z=\lvert{(\phi_1 A-\phi_{2})z+\phi_{1} q+\phi_{2}\psi(z)}\rvert-\phi_{1} q+\phi_{2}\psi(z).
		\end{equation*}
	\end{proof}
	
	\noindent According to the statement in the Theorem \ref{thm0}, we are able to find the solution to ICP$(q, A,\psi)$ with the use of an implicit fixed-point Equation (\ref{eq2}). Now we provide an iterative method for solving the ICP$(q, A,\psi)$ by using a new matrix splitting of the system matrix $A$.  Suppose $A =(M_{1}+\phi)-(N_{1}+\phi)$ as a matrix
	splitting of $A \in \mathbb{R}^{n\times n} $, where $\phi$ is a diagonal matrix of order $n$. Now from Equation (\ref{eq2}), we obtain
	\begin{equation}\label{eq03}
		\begin{split}
			(\phi_{1} M_1 + \phi_{1} \phi + \phi_{2})z&=(\phi_{1} N_1+\phi_{1} \phi)z+ \lvert{(\phi_{1} A-\phi_{2})z+\phi_{1}q+\phi_{2}\psi(z)}\rvert\\&-\phi_{1}q+\phi_{2}\psi(z).
		\end{split}
	\end{equation}
	Let $\phi_{1}M_1=M_{\phi_{1}}, ~\phi_{1}N_1=N_{\phi_{1}},  ~\phi_{1}A=A_{\phi_{1}}$ and $\phi_{1}\phi=\phi_{\phi_{1}}$. Then the Equation (\ref{eq03}) can be rewritten as 
	\begin{equation}\label{eq4}
		\begin{split}
			(M_{\phi_{1}}+\phi_{\phi_{1}}+\phi_{2})z&=(N_{\phi_{1}}+\phi_{\phi_{1}})z+\lvert(A_{\phi_{1}}-\phi_{2})z+\phi_{1}q+\phi_{2}\psi(z)\rvert\\&-\phi_{1}q+\phi_{2}\psi(z).
		\end{split}
	\end{equation}
	In order to solve the ICP$(q, A,\psi)$ from  Equation ($\ref{eq4}$) we establish a modified new modulus-based matrix splitting iterative method which is known as Method 3.1.\\	
	\begin{mthd}\label{mthd1}
		
		Let $A=(M_1+\phi)-(N_1+\phi)$ be a splitting of the matrix $A\in \mathbb{R}^{n\times n}$ and $q\in \mathbb{R}^{n}$. Suppose that  $M_{\phi_{1}}+\phi_{\phi_{1}}+\phi_{2}$ is a nonsingular matrix where $\phi$ is a diagonal matrix and   $\phi_{1}$, $\phi_{2}$   are two positive diagonal matrices. Then we use the following equation for  Method  \ref{mthd1} is 
		\begin{equation}\label{eq5}
			\begin{split}
				z^{(k+1)}&=(M_{\phi_{1}}+\phi_{\phi_{1}}+\phi_{2})^{-1}[(N_{\phi_{1}}+\phi_{\phi_{1}})z^{(k)}+\lvert(A_{\phi_{1}}-\phi_{2})z^{(k)}+\phi_{1}q\\&+\phi_2\psi(z^k)\rvert-\phi_{1}q+\phi_2\psi(z^k)].
			\end{split}
		\end{equation}
		Let residual be the Euclidean norm of the error vector which is defined in \cite{30} as follows:  $$ Res(z^{(k)})=\|min(z^{(k)}-\psi(z^k), Az^{(k)}+q) \|_{2}.$$ 
		Consider a  nonnegative initial vector $z^{(0)}\in \mathbb{R}^n$. For $k=0,1,2,\ldots,$ the iterative process continues until the iterative sequence $\{z^{(k)}\}_{k=0}^{+\infty} \subset \mathbb{R}^n$ converges. The iterative process stops if $Res(z^{(k)})$ $< $ $ \epsilon $. For  computing  $z^{(k+1)}\in \mathbb{R}^{n}$ we use the following steps.\\
		\noindent \textbf{Step 1}: Initialize a vector $z^{(0)} \in \mathbb{R}^{n}$,  $\epsilon >  0 $  and set  $ k=0 $.\\
		\textbf{Step 2}: Using the following scheme, generate  the sequence $z^{(k)}$:
		\begin{equation*}
			\begin{split}
				z^{(k+1)}&=(M_{\phi_{1}}+\phi_{\phi_{1}}+\phi_{2})^{-1}[(N_{\phi_{1}}+\phi_{\phi_{1}})z^{(k)}+\lvert(A_{\phi_{1}}-\phi_{2})z^{(k)}+\phi_{1}q\\&+\phi_2\psi(z^k)\rvert-\phi_{1}q+\phi_2\psi(z^k)].
			\end{split}
		\end{equation*}
		\textbf{Step 3}: If $ Res(z^{(k)})$ $ < $ $\epsilon$  then stop otherwise set $k=k+1$ and return to step 2. 
		
	\end{mthd}
	\begin{rem}
		If we put  $\phi_{1}=I$, where $I$ is an identity matrix of order $n$ and $\phi=0 \in \mathbb{R}^{n \times n}$ in Equation $(\ref{eq2})$, then the result of Theorem $\ref{thm0}$ reduces to the result \cite{30}.
	\end{rem}
	Furthermore, Method \ref{mthd1}  provides a general framework for solving ICP$(q, A, \psi)$.  We obtain  a class of modified new modulus-based matrix splitting iterative methods. In particular, we express the system matrix $A$ as $A=(M_1+\phi)-(N_1+\phi)$. Then 
	\begin{enumerate}
		\item when $M_{1}=A,~ \phi_1 = I, ~\phi_2 =I,~ N_{1}=0 $, from Equation ($\ref{eq5}$) we obtain 
		\begin{equation*}
			(A+\phi+I)z^{(k+1)}=\phi z^{(k)}+\lvert(A-I)z^{(k)}+q+\psi(z^{(k)})\rvert-q+\psi(z^{(k)}),	
		\end{equation*}
		this is known as the ``modified  new modulus-based iterative (MNMOD) method$"$.
		\item when $M_{1}=A, ~\phi_1 =  I, ~\phi_2 =\theta_{1} I, ~N_{1}=0 $, from Equation ($\ref{eq5}$) we obtain 
		\begin{equation*}
			(A+\phi+\theta_{1} I)z^{(k+1)}=\phi z^{(k)}+\lvert(A-\theta_{1} I)z^{(k)}+q\psi(z^{(k)})\rvert-q+\psi(z^{(k)}),	
		\end{equation*}
		this is known as the ``modified  new modified modulus iterative (MNMMOD) method$"$.
		\item when $M_{1}=\frac{1}{\theta_{1}}(D_{1}-\theta_{2} L_{1}), ~ N_{1}=\frac{1}{\theta_{1}}[(1-\theta_{1})D_{1}+(\theta_{1}-\theta_{2})L_{1}+\theta_{1} U_{1}]$, 	where $ D_{1} = diag(A)$ and  $L_{1}$, $U_{1}$ are the strictly lower, upper triangular 
		matrices of $A$, respectively.  From Equation ($\ref{eq5}$), we obtain 
		\begin{equation*}
			\begin{split}
				((D_{\phi_{1}}-\theta_{2} L_{\phi_{1}})+\theta_{1} \phi_{\phi_{1}}+\theta_{1} \phi_{2})z^{(k+1)}&=((1-\theta_{1})D_{\phi_{1}}+(\theta_{1}-\theta_{2})L_{\phi_{1}}\\&+\theta_{1} U_{\phi_{1}}+\theta_{1} \phi_{\phi_{1}})z^{(k)}+\theta_{1}(\lvert(A_{\phi_{1}}\\&-\phi_{2})z^{(k)}+\phi_{1}q+\phi_{2}\psi(z^{(k)})\rvert\\&-\phi_{1}q+\phi_{2}\psi(z^{(k)})),
			\end{split}
		\end{equation*}
	\end{enumerate}
	\noindent this is known as the ``modified  new modulus-based accelerated over relaxation iterative (MNMAOR) method$"$. \\
	The MNMAOR method clearly converts into the  modified new modulus-based successive overrelaxation (MNMSOR) method when $(\theta_1, \theta_2)$ takes the values $(\theta_1, \theta_1)$, modified new modulus-based Gauss-Seidel (MNMGS) method when $(\theta_1, \theta_2)$ takes the values $(1, 1)$ and modified new modulus-based Jacobi (MNMJ) method when $(\theta_1, \theta_2)$ takes the values $(1, 0)$.
	\section{Convergence analysis}
	In this section, we discuss the convergence analysis for the Method \ref{mthd1}. First, we discuss the convergence condition when the system matrix $A$ is a $P$-matrix.
	\begin{thm}\label{thm1}
		Let $A=(M_{1}+\phi)-(N_{1}+\phi)$ be a splitting of the $P$-matrix $A\in \mathbb{R}^{n\times n}$ and   $M_{\phi_{1}}+\phi_{\phi_{1}}+\phi_{2}$ be nonsingular matrix, where  $\phi$  is a diagonal matrix and  $\phi_{1}$, $\phi_{2}$   are positive diagonal matrices. Let $\rho(L) < 1$, where $L=\lvert(M_{\phi_{1}}+\phi_{\phi_{1}}+\phi_{2})^{-1}\rvert(\lvert N_{\phi_{1}}+\phi_{\phi_{1}}\rvert+\lvert A_{\phi_{1}}-\phi_{2}\rvert+2 \phi_{2}G)$ and $\lvert \psi(x)-\psi(y)\rvert \leq G \lvert x-y \rvert$ $\forall ~x,y \in \mathbb{R}^{n}$, where $G$ is a nonnegative matrix of order $n$.
		Then  the iterative sequence $\{z^{(k)}\}_ {k=0}^{+\infty} \subset \mathbb{R}^n$ generated by Method $\ref{mthd1}$ converges to a unique solution $z^*\in \mathbb{R}^n$  for any nonnegative initial  vector $z^{(0)}\in \mathbb{R}^n$.
	\end{thm}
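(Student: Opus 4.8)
The plan is to track the error between the iterates and the ICP solution and to show it contracts entrywise under the nonnegative matrix $L$. Since $A$ is a $P$-matrix, the ICP$(q,A,\psi)$ admits a solution $z^{*}$, and by Theorem \ref{thm0} this $z^{*}$ satisfies the fixed-point equation \eqref{eq2}, equivalently its splitting form \eqref{eq4}:
\[
(M_{\phi_{1}}+\phi_{\phi_{1}}+\phi_{2})z^{*}=(N_{\phi_{1}}+\phi_{\phi_{1}})z^{*}+\lvert(A_{\phi_{1}}-\phi_{2})z^{*}+\phi_{1}q+\phi_{2}\psi(z^{*})\rvert-\phi_{1}q+\phi_{2}\psi(z^{*}).
\]
First I would subtract this identity from the iteration \eqref{eq5} defining $z^{(k+1)}$. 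Writing $u^{(k)}=(A_{\phi_{1}}-\phi_{2})z^{(k)}+\phi_{1}q+\phi_{2}\psi(z^{(k)})$ and $u^{*}$ for the analogous quantity at $z^{*}$, the constant terms $\phi_{1}q$ cancel and the difference collapses to
\[
z^{(k+1)}-z^{*}=(M_{\phi_{1}}+\phi_{\phi_{1}}+\phi_{2})^{-1}\big[(N_{\phi_{1}}+\phi_{\phi_{1}})(z^{(k)}-z^{*})+(\lvert u^{(k)}\rvert-\lvert u^{*}\rvert)+\phi_{2}(\psi(z^{(k)})-\psi(z^{*}))\big].
\]

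Next I would take entrywise absolute values and apply the elementary bound $\lvert Bv\rvert\le\lvert B\rvert\,\lvert v\rvert$ to each summand. For the modulus term I would use the reverse triangle inequality, componentwise, together with the definition of $u^{(k)}$:
\[
\big\lvert\,\lvert u^{(k)}\rvert-\lvert u^{*}\rvert\,\big\rvert\le\lvert u^{(k)}-u^{*}\rvert\le\lvert A_{\phi_{1}}-\phi_{2}\rvert\,\lvert z^{(k)}-z^{*}\rvert+\phi_{2}\,\lvert\psi(z^{(k)})-\psi(z^{*})\rvert,
\]
where $\lvert\phi_{2}\rvert=\phi_{2}$ because $\phi_{2}$ is a positive diagonal matrix. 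The Lipschitz hypothesis $\lvert\psi(x)-\psi(y)\rvert\le G\lvert x-y\rvert$ then converts both $\psi$-differences into $G\lvert z^{(k)}-z^{*}\rvert$, once from the modulus term and once from the explicit $\phi_{2}(\psi(z^{(k)})-\psi(z^{*}))$ summand; this is precisely the origin of the factor $2\phi_{2}G$. Collecting everything yields
\[
\lvert z^{(k+1)}-z^{*}\rvert\le\lvert(M_{\phi_{1}}+\phi_{\phi_{1}}+\phi_{2})^{-1}\rvert\big(\lvert N_{\phi_{1}}+\phi_{\phi_{1}}\rvert+\lvert A_{\phi_{1}}-\phi_{2}\rvert+2\phi_{2}G\big)\lvert z^{(k)}-z^{*}\rvert=L\lvert z^{(k)}-z^{*}\rvert.
\]

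Iterating the estimate gives $\lvert z^{(k)}-z^{*}\rvert\le L^{k}\lvert z^{(0)}-z^{*}\rvert$, and since $\rho(L)<1$ the nonnegative matrix powers satisfy $L^{k}\to 0$, so $z^{(k)}\to z^{*}$ for every initial vector $z^{(0)}$. Uniqueness I would obtain from the same contraction estimate applied to two solutions $z^{*}$ and $\tilde z$: it yields $(I-L)\lvert z^{*}-\tilde z\rvert\le 0$, and because $\rho(L)<1$ the matrix $(I-L)^{-1}=\sum_{k\ge 0}L^{k}$ exists and is nonnegative, forcing $\lvert z^{*}-\tilde z\rvert\le 0$, i.e. $z^{*}=\tilde z$. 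The step I expect to be the main obstacle is the handling of the nonsmooth modulus term $\lvert u^{(k)}\rvert-\lvert u^{*}\rvert$: it is the reverse triangle inequality that lets the absolute-value nonlinearity be absorbed into a purely entrywise linear bound, and one must confirm that the Lipschitz matrix $G$ enters exactly twice so the coefficient matches the $2\phi_{2}G$ appearing in $L$.
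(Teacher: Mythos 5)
Your proof follows essentially the same route as the paper's: subtract the fixed-point identity \eqref{eq6} satisfied by $z^{*}$ from the iteration \eqref{eq5}, bound the modulus term via the componentwise reverse triangle inequality, absorb the two $\psi$-differences through the Lipschitz matrix $G$ to get the factor $2\phi_{2}G$, and conclude $\lvert z^{(k+1)}-z^{*}\rvert\le L\lvert z^{(k)}-z^{*}\rvert$ with $\rho(L)<1$. If anything, your closing steps are more careful than the paper's (which loosely infers $\lvert z^{(k+1)}-z^{*}\rvert<\lvert z^{(k)}-z^{*}\rvert$ from $\rho(L)<1$, where your $L^{k}\to 0$ argument and the Neumann-series uniqueness argument are the correct justifications), though note that, like the paper, you do not actually prove existence of $z^{*}$ --- the $P$-matrix property alone guarantees this for the LCP, not for a general ICP$(q,A,\psi)$.
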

	\begin{proof}
		Let $z^{*}$ be a solution of the ICP$(q, A,\psi)$.  From Equation($\ref{eq4}$) we obtain
		\begin{equation}\label{eq6}
			\begin{split}
				(M_{\phi_{1}}+\phi_{\phi_{1}}+\phi_{2})z^*&=(N_{\phi_{1}}+\phi_{\phi_{1}})z^*+\lvert(A_{\phi_{1}}-\phi_{2})z^*+\phi_{1}q+\phi_{2}\psi(z^*)\rvert\\&-\phi_{1}q+\phi_{2}\psi(z^*).
			\end{split}
		\end{equation}
		From Equation($\ref{eq4}$) and Equation ($\ref{eq6}$), we obtain
		\begin{equation*}
			\begin{split}
				(M_{\phi_{1}}+\phi_{\phi_{1}}+\phi_{2})(z^{(k+1)}-z^{*})&=(N_{\phi_{1}}+\phi_{\phi_{1}})(z^{(k)}-z^{*})+(\lvert(A_{\phi_{1}}-\phi_{2})z^{(k)}+\phi_{1}q\\&+\phi_2\psi(z^{(k)})\rvert-\lvert(A_{\phi_{1}}-\phi_{2})z^*+\phi_{1}q+\phi_{2}\psi(z^*)\rvert)\\&+(\phi_2\psi(z^{(k)})-\phi_2\psi(z^*)).
			\end{split}
		\end{equation*}
		\noindent By applying   both side modulus, it follows that 
		\begin{equation*}
			\begin{split}
				&\leq \lvert(M_{\phi_{1}}+\phi_{\phi_{1}}+\phi_{2})^{-1}\rvert\lvert[(N_{\phi_{1}}+\phi_{\phi_{1}})(z^{(k)}-z^{*})\rvert+(\lvert(A_{\phi_{1}}-\phi_{2})z^{(k)}\\&+\phi_{1}q+\phi_2\psi(z^{(k)})\rvert-\lvert(A_{\phi_{1}}-\phi_{2})z^*+\phi_{1}q+\phi_{2}\psi(z^*)\rvert)+\phi_2\psi(z^{(k)})\\&-\phi_2\psi(z^*)]\rvert\\
				&\leq \lvert(M_{\phi_{1}}+\phi_{\phi_{1}}+\phi_{2})^{-1}\rvert[\lvert N_{\phi_{1}}+\phi_{\phi_{1}}\rvert+\lvert A_{\phi_{1}}-\phi_{2}\rvert+2\phi_2G]\lvert z^{(k)}-z^*\rvert
				\\&\leq L \lvert z^{(k)}-z^*\rvert
			\end{split}
		\end{equation*}	
		where $L=\lvert(M_{\phi_{1}}+\phi_{\phi_{1}}+\phi_{2})^{-1}\rvert(\lvert N_{\phi_{1}}+\phi_{\phi_{1}}\rvert+\lvert A_{\phi_{1}}-\phi_{2}\rvert+2\phi_2G)$.
		Since $ \rho(L ) < 1$, then
		$$\lvert z^{(k+1)}-z^*\rvert <  \lvert z^{(k)}-z^*\rvert.$$
		Hence, for any nonnegative initial vector $z^{(0)}\in \mathbb{R}^n$ the iterative sequence $\{z^{(k)}\}_{k=0}^{+\infty} \subset \mathbb{R}^n$ generated by Method $\ref{mthd1}$ converges to a unique solution $z^{*} \in \mathbb{R}^n$.
	\end{proof}
	

	
	

	\noindent  In the following results, we discuss some convergence conditions for Method \ref{mthd1} when system matrix $A$ is an $H_+$-matrix.
	\begin{thm}\label{thm2}
		Let $A=(M_{1}+\phi)-(N_{1}+\phi)$ be an $H$-splitting of the $H_{+}$-matrix $A\in \mathbb{R}^{n\times n}$. Let  $\phi_{2}\geq D_{\phi_{1}}$, where $D_{\phi_{1}}=\phi_{1}D_{1}$ and $\phi_{1}$,   $\phi_{2}$  are positive diagonal matrices. Let $\phi=(\phi_{ij})$, $\phi_{1}=(w_{ij})$, $\phi_2=(\xi_{ij})$, $M_{1}=(m_{ij})$, $N_{1}=(n_{ij}) \in R^{n \times n}$ and    $G=(g_{ij}) \in R^{n \times n}$ with $g_{ii}\leq \frac{w_{ii}\lvert n_{ii}+\phi_{ii}\rvert - (w_{ii}n_{ii}+\lvert w_{ii}\phi_{ii}\rvert )}{2\xi_{ii}}$
		and $g_{ij}\leq \frac{\lvert w_{ij}m_{ij}\rvert+\lvert w_{ij}n_{ij}\rvert-\lvert w_{ij}a_{ij}\rvert}{2\xi_{ii}}$ for $i,j=1,2,\ldots,n$. Then the iterative sequence $\{z^{(k)}\}_{k=0}^{+\infty} \subset \mathbb{R}^n$ generated by Method $\ref{mthd1}$ converges to a unique solution $z^{*} \in \mathbb{R}^n$ for any nonnegative initial vector $z^{(0)}\in \mathbb{R}^n$.
	\end{thm}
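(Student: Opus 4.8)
The plan is to reduce the whole statement to the spectral estimate behind Theorem \ref{thm1}. Following the error recursion derived there, the iterates satisfy $\lvert z^{(k+1)}-z^{*}\rvert \leq L\,\lvert z^{(k)}-z^{*}\rvert$ with
$$L=\lvert(M_{\phi_{1}}+\phi_{\phi_{1}}+\phi_{2})^{-1}\rvert\,(\lvert N_{\phi_{1}}+\phi_{\phi_{1}}\rvert+\lvert A_{\phi_{1}}-\phi_{2}\rvert+2\phi_{2}G),$$
so it suffices to prove $\rho(L)<1$ under the present $H_{+}$/$H$-splitting hypotheses. I would abbreviate $B=M_{\phi_{1}}+\phi_{\phi_{1}}+\phi_{2}$ and $C=\lvert N_{\phi_{1}}+\phi_{\phi_{1}}\rvert+\lvert A_{\phi_{1}}-\phi_{2}\rvert+2\phi_{2}G\geq 0$, so that $L=\lvert B^{-1}\rvert C$, and work entrywise: since $\phi_{1}$ is positive diagonal and $\phi,\phi_{2}$ are diagonal, $B$ has off-diagonal entries $w_{ii}m_{ij}$ and diagonal entries $w_{ii}(m_{ii}+\phi_{ii})+\xi_{ii}$.

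The central step is to show that the $Z$-matrix $W:=\langle B\rangle-C$ is a nonsingular $M$-matrix. Its off-diagonal entries are $-w_{ii}\lvert m_{ij}\rvert-w_{ii}\lvert n_{ij}\rvert-w_{ii}\lvert a_{ij}\rvert-2\xi_{ii}g_{ij}\leq 0$, so $W$ is indeed a $Z$-matrix. Using $A=M_{1}-N_{1}$ (hence $a_{ij}=m_{ij}-n_{ij}$), the positivity of $w_{ii},\xi_{ii}$, and the hypothesis $\phi_{2}\geq D_{\phi_{1}}$ (which gives $\lvert w_{ii}a_{ii}-\xi_{ii}\rvert=\xi_{ii}-w_{ii}a_{ii}$, since $a_{ii}>0$ for the $H_{+}$-matrix $A$), I would substitute the two bounds on $G$. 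The bound on $g_{ij}$ yields $2\xi_{ii}g_{ij}\leq w_{ii}(\lvert m_{ij}\rvert+\lvert n_{ij}\rvert-\lvert a_{ij}\rvert)$, collapsing the off-diagonal entry to $W_{ij}\geq -2w_{ii}(\lvert m_{ij}\rvert+\lvert n_{ij}\rvert)$; the bound on $g_{ii}$ together with $w_{ii}a_{ii}+w_{ii}n_{ii}=w_{ii}m_{ii}$ collapses the diagonal entry to $W_{ii}\geq 2w_{ii}\big((m_{ii}+\phi_{ii})-\lvert n_{ii}+\phi_{ii}\rvert\big)$. In matrix form this is exactly $W\geq 2\phi_{1}\big(\langle M_{1}+\phi\rangle-\lvert N_{1}+\phi\rvert\big)$ entrywise.

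By the $H$-splitting hypothesis, $\langle M_{1}+\phi\rangle-\lvert N_{1}+\phi\rvert$ is a nonsingular $M$-matrix, and scaling by the positive diagonal $2\phi_{1}$ preserves this. Since $W$ is a $Z$-matrix dominating a nonsingular $M$-matrix entrywise, the $M$-matrix comparison theorem (if $A\leq B$ entrywise with $A$ a nonsingular $M$-matrix and $B$ a $Z$-matrix, then $B$ is a nonsingular $M$-matrix) shows $W$ is a nonsingular $M$-matrix; applying the same theorem to $W\leq\langle B\rangle$ shows $\langle B\rangle$ is one as well. As $B$ has positive diagonal, it is an $H_{+}$-matrix, whence $\lvert B^{-1}\rvert\leq\langle B\rangle^{-1}$ and therefore $0\leq L\leq\langle B\rangle^{-1}C$. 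To finish, I would pick $u>0$ with $Wu>0$ (available since $W$ is a nonsingular $M$-matrix) and set $U=\mathrm{diag}(u)$; then $\langle B\rangle u\geq Wu>0$, so $\langle B\rangle U$ is strictly diagonally dominant, and Lemma \ref{2.4} gives
$$\rho(L)\leq\rho\big(\langle B\rangle^{-1}C\big)=\rho\big((\langle B\rangle U)^{-1}(CU)\big)\leq\big\lVert(\langle B\rangle U)^{-1}(CU)\big\rVert_{\infty}\leq\max_{1\leq i\leq n}\frac{(Cu)_{i}}{(\langle B\rangle u)_{i}}.$$
Because $(\langle B\rangle u-Cu)_{i}=(Wu)_{i}>0$ for every $i$, the maximum is strictly below $1$, so $\rho(L)<1$ and convergence to the unique $z^{*}$ follows.

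I expect the only real obstacle to be the entrywise reduction $W\geq 2\phi_{1}(\langle M_{1}+\phi\rangle-\lvert N_{1}+\phi\rvert)$, which requires resolving the absolute values consistently — in particular pinning down the signs of $m_{ii}+\phi_{ii}$ and $\phi_{ii}$ so that the two $G$-bounds align exactly with the comparison-matrix entries (these sign facts should be supplied by the $H$-splitting together with the structure $m_{ii}=a_{ii}/\theta_{1}>0$ in the concrete splittings). I would also flag that the off-diagonal hypothesis is printed with $w_{ij}$, which vanishes for the diagonal $\phi_{1}$ and makes the bound force $g_{ij}\leq 0$; this is evidently a typo for $w_{ii}$, and the computation above uses $w_{ii}$.
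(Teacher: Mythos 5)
Your proposal is correct and takes essentially the same route as the paper: the same error recursion with $L=\lvert(M_{\phi_{1}}+\phi_{\phi_{1}}+\phi_{2})^{-1}\rvert(\lvert N_{\phi_{1}}+\phi_{\phi_{1}}\rvert+\lvert A_{\phi_{1}}-\phi_{2}\rvert+2\phi_{2}G)$ inherited from Theorem \ref{thm1}, and the same diagonal/off-diagonal entrywise reduction showing the $Z$-matrix $\langle B\rangle-C$ dominates $2\phi_{1}(\langle M_{1}+\phi\rangle-\lvert N_{1}+\phi\rvert)$ and is therefore a nonsingular $M$-matrix under the $H$-splitting hypothesis (your flag of the $w_{ij}$-versus-$w_{ii}$ typo is apt, as the paper's own computation contains the same inconsistency). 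The only divergence is the final deduction of $\rho(L)<1$: the paper observes that $\bar{H}-\bar{F}$ is an $M$-splitting of a nonsingular $M$-matrix and cites the standard spectral consequence, whereas you extract a positive vector $u$ with $(\langle B\rangle-C)u>0$ and apply Lemma \ref{2.4} after diagonal scaling --- an equivalent standard argument, and in fact the very device the paper uses in its subsequent theorem.
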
	
	\begin{proof}
		Since $\langle M_{1}+\phi\rangle-\lvert N_{1}+\phi\rvert$  is an $M$-matrix and 
		$$ \langle M_{1}+\phi\rangle-\lvert N_{1}+\phi\rvert \leq  \langle M_{1}+\phi\rangle, $$	
		\noindent matrix  $\langle M_{1}+\phi\rangle$ is an $M$-matrix  and $\phi_{2}\geq \phi_{1}D_{1}$, we can obtain that $M_{\phi_{1}}+\phi_{\phi_{1}}+\phi_{2}$ is an $H_{+}$-matrix and $$\lvert(M_{\phi_{1}}+\phi_{\phi_{1}}+\phi_{2})^{-1}\rvert\leq (\langle M_{\phi_{1}}\rangle+\phi_{\phi_{1}}+\phi_{2})^{-1}.$$
		Now we show that matrix 	$\langle M_{\phi_{1}}\rangle+\lvert\phi_{\phi_{1}}\rvert+\phi_{2}-\lvert N_{\phi_{1}}+\phi_{\phi_{1}}\rvert-\lvert A_{\phi_{1}}-\phi_{2}\rvert-2\phi_2G$ is a nonsingular $M$-matrix. First, we calculate 
		
		\noindent	for $i=j$,\\
		$\lvert w_{ii}m_{ii} \rvert + \lvert w_{ii}\phi_{ii} \rvert+\xi_{ii}-\lvert w_{ii}n_{ii}+w_{ii}\phi_{ii}\rvert-\lvert w_{ii}a_{ii}-\xi_{ii} \rvert -2\xi_{ii}g_{ii}$\\
		$\geq 2\lvert w_{ii}m_{ii}+  w_{ii}\phi_{ii}\rvert -2\lvert w_{ii}n_{ii}+w_{ii}\phi_{ii}\rvert $\\
		$\geq 2( \lvert w_{ii}m_{ii}+  w_{ii}\phi_{ii}\rvert-\lvert w_{ii}n_{ii}+w_{ii}\phi_{ii}\rvert)$\\
		
		\noindent and	for $i\neq j$,\\
		$-\lvert w_{ij}m_{ij}\rvert-\lvert w_{ij}n_{ij}\rvert-\lvert w_{ij}a_{ij}\rvert-2w_{ij}g_{ij}$\\
		$\geq -\lvert w_{ij}m_{ij}\rvert-\lvert w_{ij}n_{ij}\rvert-\lvert w_{ij}a_{ij}\rvert-(\lvert w_{ij}m_{ij}\rvert+\lvert w_{ij}n_{ij}\rvert-\lvert w_{ij}a_{ij}\rvert)$\\
		$\geq -(\lvert w_{ij}m_{ij}\rvert+\lvert w_{ij}n_{ij}\rvert).$\\ 
		It follows that \\
		$\langle M_{\phi_{1}}\rangle+\lvert\phi_{\phi_{1}}\rvert+\phi_{2}-\lvert N_{\phi_{1}}+\phi_{\phi_{1}}\rvert-\lvert A_{\phi_{1}}-\phi_{2}\rvert-2\phi_2G \geq 2(\langle M_{1}+\phi\rangle-\lvert N_{1}+\phi\rvert).$\\ 
		We have $\langle M_{1}+\phi\rangle-\lvert N_{1}+\phi\rvert$ is an $M$-matrix. Now we can obtain	$\langle M_{\phi_{1}}\rangle+\lvert\phi_{\phi_{1}}\rvert+\phi_{2}-\lvert N_{\phi_{1}}+\phi_{\phi_{1}}\rvert-\lvert A_{\phi_{1}}-\phi_{2}\rvert-2\phi_2G$ is a nonsingular $M$-matrix.\\
		From Theorem $\ref{thm1}$, we obtain
		\begin{equation*}
			\begin{split}
				\lvert z^{(k+1)}-z^{*}\rvert&=\lvert(M_{\phi_{1}}+\phi_{\phi_{1}}+\phi_{2})^{-1}\rvert(\lvert N_{\phi_{1}}+\phi_{\phi_{1}}\rvert+\lvert A_{\phi_{1}}-\phi_{2}\rvert+2\phi_2G)\lvert z^{(k)}-z^{*}\rvert\\&
				\leq
				(\langle M_{\phi_{1}}\rangle+\lvert\phi_{\phi_{1}}\rvert+\phi_{2})^{-1}(\lvert N_{\phi_{1}}+\phi_{\phi_{1}}\rvert+\lvert A_{\phi_{1}}-\phi_{2}\rvert+2\phi_2G)\lvert z^{(k)}-z^{*}\rvert
			\end{split}
		\end{equation*}
		Let   $\bar{L}=\bar{H}^{-1}\bar{F}$, where
		$\bar{H}=\langle M_{\phi_{1}}\rangle+\lvert\phi_{\phi_{1}}\rvert+\phi_{2}$ and $\bar{F}=\lvert N_{\phi_{1}}+\phi_{\phi_{1}}\rvert+\lvert A_{\phi_{1}}-\phi_{2}\rvert+2\phi_2G$.\\
		We now have \\ 
		$\bar{H}-\bar{F}=\langle M_{\phi_{1}}\rangle+\lvert\phi_{\phi_{1}}\rvert+\phi_{2}-\lvert N_{\phi_{1}}+\phi_{\phi_{1}}\rvert-\lvert A_{\phi_{1}}-\phi_{2}\rvert-2\phi_2G$ and $\bar{H}$ are $M$-matrices and $\bar{F}\geq 0$.\\
		It follows that, the splitting $\bar{H}-\bar{F}$ is an $M$-splitting.\\
		Here, we are able to determine $\rho(\bar{L})< 1$. According to Theorem {\ref{thm1}}, the iterative sequence $\{z^{(k)}\}_{k=0}^{+\infty} \subset \mathbb{R}^n$ generated by Method $\ref{mthd1}$ converges to a unique solution $z^{*} \in \mathbb{R}^n$ for any nonnegative initial vector $z^{(0)}\in \mathbb{R}^n$.
	\end{proof}
	\begin{thm}\label{thm3}
		Let $A=(M_{1}+\phi)-(N_1+\phi)= D_{1}-L_1-U_1 =D_1-B$ be a splitting of the $H_{+}$-matrix $A\in \mathbb{R}^{n\times n}$, where $B=L_1+U_{1}$  and $\rho(D_{\phi_{1}}^{-1}\lvert B_{\phi_{1}}\rvert+\phi_2G) < 1$. Let  $\phi_{2}\geq D_{\phi_{1}}$, where $G$ is a nonnegative matrix, $\phi$ is a diagonal matrix and $\phi_{1}$ ,   $\phi_{2}$  are positive diagonal matrices. If the parameters $\theta_{1}$, $\theta_{2}$  satisfy 
		\begin{equation}\label{eq7}
			0 ~\leq~ \max\{\theta_{1}, \theta_{2}\}\rho(D_{\phi_{1}}^{-1}\lvert B_{\phi_{1}}\rvert+\phi_2G) < \min\{1, \theta_{1}\},
		\end{equation} 
		then the iterative sequence $\{z^{(k)}\}_ {k=0}^{+\infty} \subset \mathbb{R}^n$ generated by MNMAOR method converges to a unique solution $z^*\in \mathbb{R}^n$ for  any nonnegative initial vector $z^{(0)}\in \mathbb{R}^n$.
	\end{thm}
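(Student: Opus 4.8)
The strategy is to exhibit the MNMAOR method as the instance of Method \ref{mthd1} coming from $M_{1}=\frac{1}{\theta_{1}}(D_{1}-\theta_{2}L_{1})$ and $N_{1}=\frac{1}{\theta_{1}}[(1-\theta_{1})D_{1}+(\theta_{1}-\theta_{2})L_{1}+\theta_{1}U_{1}]$, and then to verify the hypothesis $\rho(L)<1$ of Theorem \ref{thm1} for this choice. Substituting these into Equation (\ref{eq5}) and clearing the common factor $\theta_{1}$, the iteration matrix produced by the estimate in the proof of Theorem \ref{thm1} becomes $L=\lvert\tilde{M}^{-1}\rvert\tilde{F}$, where $\tilde{M}=(D_{\phi_{1}}-\theta_{2}L_{\phi_{1}})+\theta_{1}\phi_{\phi_{1}}+\theta_{1}\phi_{2}$, $\tilde{N}=(1-\theta_{1})D_{\phi_{1}}+(\theta_{1}-\theta_{2})L_{\phi_{1}}+\theta_{1}U_{\phi_{1}}+\theta_{1}\phi_{\phi_{1}}$, and $\tilde{F}=\lvert\tilde{N}\rvert+\theta_{1}\lvert A_{\phi_{1}}-\phi_{2}\rvert+2\theta_{1}\phi_{2}G$. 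Everything therefore reduces to proving $\rho(\lvert\tilde{M}^{-1}\rvert\tilde{F})<1$.

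The heart of the argument is to show that the $Z$-matrix $\langle\tilde{M}\rangle-\tilde{F}$ is a nonsingular $M$-matrix. First I would use $\phi_{2}\geq D_{\phi_{1}}$ to write $\lvert A_{\phi_{1}}-\phi_{2}\rvert=(\phi_{2}-D_{\phi_{1}})+\lvert B_{\phi_{1}}\rvert$ with $\lvert B_{\phi_{1}}\rvert=\lvert L_{\phi_{1}}\rvert+\lvert U_{\phi_{1}}\rvert$, and then expand $\langle\tilde{M}\rangle-\tilde{F}$ into diagonal and off-diagonal parts. After the cancellations the diagonal entries collapse to $2\min\{1,\theta_{1}\}D_{\phi_{1}}-2\theta_{1}\phi_{2}\operatorname{diag}(G)$, the absolute value $\lvert(1-\theta_{1})D_{\phi_{1}}+\theta_{1}\phi_{\phi_{1}}\rvert$ inside $\lvert\tilde{N}\rvert$ being the source of the $\min\{1,\theta_{1}\}$; the off-diagonal entries are bounded below by $-2\max\{\theta_{1},\theta_{2}\}\lvert B_{\phi_{1}}\rvert-2\theta_{1}\phi_{2}(G-\operatorname{diag}G)$, the coefficient $\theta_{2}+\lvert\theta_{1}-\theta_{2}\rvert+\theta_{1}$ of $\lvert L_{\phi_{1}}\rvert$ producing the factor $2\max\{\theta_{1},\theta_{2}\}$. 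Combining the two, one obtains the entrywise bound $\langle\tilde{M}\rangle-\tilde{F}\geq 2\min\{1,\theta_{1}\}D_{\phi_{1}}-2\max\{\theta_{1},\theta_{2}\}(\lvert B_{\phi_{1}}\rvert+\phi_{2}G)$.

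To conclude I would invoke Perron--Frobenius. Let $u>0$ be a positive eigenvector of the nonnegative matrix appearing in (\ref{eq7}), with $\rho_{0}=\rho(D_{\phi_{1}}^{-1}\lvert B_{\phi_{1}}\rvert+\phi_{2}G)$, so that $(\lvert B_{\phi_{1}}\rvert+\phi_{2}G)u=\rho_{0}D_{\phi_{1}}u$ (when the matrix is reducible I would first perturb it by $\varepsilon$ times the all-ones matrix and pass to the limit). Applying the previous entrywise estimate to $u$ gives $(\langle\tilde{M}\rangle-\tilde{F})u\geq 2(\min\{1,\theta_{1}\}-\max\{\theta_{1},\theta_{2}\}\rho_{0})D_{\phi_{1}}u$, and the parameter restriction (\ref{eq7}) makes the scalar factor strictly positive; since $D_{\phi_{1}}u>0$, we get $(\langle\tilde{M}\rangle-\tilde{F})u>0$, so the $Z$-matrix $\langle\tilde{M}\rangle-\tilde{F}$ is a nonsingular $M$-matrix. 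Since $\tilde{F}\geq 0$, the larger $Z$-matrix $\langle\tilde{M}\rangle$ is then also a nonsingular $M$-matrix, whence $\tilde{M}$ is an $H_{+}$-matrix and $\lvert\tilde{M}^{-1}\rvert\leq\langle\tilde{M}\rangle^{-1}$. This yields $L\leq\langle\tilde{M}\rangle^{-1}\tilde{F}$ and hence $\rho(L)\leq\rho(\langle\tilde{M}\rangle^{-1}\tilde{F})$; because $\langle\tilde{M}\rangle-\tilde{F}$ is a nonsingular $M$-matrix with $\tilde{F}\geq 0$, the splitting is an $M$-splitting and $\rho(\langle\tilde{M}\rangle^{-1}\tilde{F})<1$. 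Thus $\rho(L)<1$, and Theorem \ref{thm1} delivers convergence to the unique solution.

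The step I expect to be the main obstacle is the collapse of $\langle\tilde{M}\rangle-\tilde{F}$ in the second paragraph: extracting the clean factors $\min\{1,\theta_{1}\}$ and $\max\{\theta_{1},\theta_{2}\}$ from the raw coefficients $\lvert(1-\theta_{1})D_{\phi_{1}}+\theta_{1}\phi_{\phi_{1}}\rvert$ and $\theta_{2}+\lvert\theta_{1}-\theta_{2}\rvert+\theta_{1}$ requires a careful case analysis on whether $\theta_{1}\lessgtr 1$ and $\theta_{1}\lessgtr\theta_{2}$, and it is precisely this bookkeeping that ties the estimate to the form of Equation (\ref{eq7}); one must also confirm that the diagonal of $\tilde{M}$ stays positive (so that the $H_{+}$-matrix reduction is legitimate) under the standing hypotheses on $\phi$, $\phi_{1}$ and $\phi_{2}$.
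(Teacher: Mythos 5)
Your proposal is correct and takes essentially the same route as the paper: the paper likewise realizes the MNMAOR iteration by substituting $M_{1}=\frac{1}{\theta_{1}}(D_{1}-\theta_{2}L_{1})$, $N_{1}=\frac{1}{\theta_{1}}((1-\theta_{1})D_{1}+(\theta_{1}-\theta_{2})L_{1}+\theta_{1}U_{1})$ into the $\bar{H}$, $\bar{F}$ framework of Theorem \ref{thm2}, derives the same entrywise bound $\theta_{1}(\bar{H}-\bar{F})\geq 2\min\{1,\theta_{1}\}D_{\phi_{1}}-2\max\{\theta_{1},\theta_{2}\}\lvert B_{\phi_{1}}\rvert-2\theta_{1}\phi_{2}G$ via the identities $1+\theta_{1}-\lvert 1-\theta_{1}\rvert=2\min\{1,\theta_{1}\}$ and $\theta_{2}+\lvert\theta_{1}-\theta_{2}\rvert+\theta_{1}=2\max\{\theta_{1},\theta_{2}\}$, and then concludes through the $M$-splitting argument and Theorem \ref{thm1}. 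The only differences are expository: where the paper asserts it is ``easy to verify'' that the resulting $Z$-matrix is a nonsingular $M$-matrix, you supply the Perron--Frobenius eigenvector argument explicitly (implicitly reading the hypothesis as $\rho(D_{\phi_{1}}^{-1}(\lvert B_{\phi_{1}}\rvert+\phi_{2}G))<1$, which is exactly the parenthesization the paper's own proof uses in its factored form $2D_{\phi_{1}}(\min\{1,\theta_{1}\}I-\max\{\theta_{1},\theta_{2}\}D_{\phi_{1}}^{-1}(\lvert B_{\phi_{1}}\rvert+\phi_{2}G))$).
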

	\begin{proof} Following the  proof of the Theorem $\ref{thm2}$, we have \\
		$$\bar{H}-\bar{F}=\langle M_{\phi_{1}}\rangle+\lvert\phi_{\phi_{1}}\rvert+\phi_{2}-\lvert N_{\phi_{1}}+\phi_{\phi_{1}}\rvert-\lvert A_{\phi_{1}}-\phi_{2}\rvert-2\phi_2G.$$
		Let $M_{1}=\frac{1}{\theta_{1}}(D_{1}-\theta_{2} L_{1})$  and $N_{1}=\frac{1}{\theta_{1}}((1-\theta_{1})D_{1}+(\theta_{1}-\theta_{2})L_{1}+\theta_{1} U_{1}).$ Then
		\begin{equation*}
			\begin{split}
				\bar{H}&=\langle M_{\phi_{1}}\rangle+\lvert\phi_{\phi_{1}}\rvert+\phi_{2}\\&
				=\langle \frac{1}{\theta_{1}}(D_{\phi_1}-\theta_{2} L_{\phi_1})\rangle+\lvert\phi_{\phi_{1}}\rvert+\phi_{2}\\&
				= \frac{1}{\theta_{1}}[(D_{\phi_1}-\theta_{2} \lvert L_{\phi_1}\rvert)+\theta_{1}\lvert\phi_{\phi_{1}}\rvert+\theta_{1}\phi_{2}]
			\end{split}
		\end{equation*}
		and
		\begin{equation*}
			\begin{split}
				\bar{F}=\frac{1}{\theta_{1}}[\lvert(1-\theta_{1})D_{\phi_1}+(\theta_{1}-\theta_{2})L_{\phi_1}+\theta_{1} U_{\phi_1}+\theta_{1}\phi_{\phi_{1}}\rvert+\theta_{1}\lvert A_{\phi_{1}}-\phi_{2}\rvert+2\theta_{1}\phi_2G].
			\end{split}
		\end{equation*}
		Now we compute
		\begin{equation*}
			\begin{split}
				\theta_1(\bar{H}-\bar{F})&=[(D_{\phi_1}-\theta_{2} \lvert L_{\phi_1}\rvert)+\theta_{1}\lvert\phi_{\phi_{1}}\rvert+\theta_{1}\phi_{2}]-[\lvert(1-\theta_{1})D_{\phi_1}\\&+(\theta_{1}-\theta_{2})L_{\phi_1}+\theta_{1} U_{\phi_1}+\theta_{1}\phi_{\phi_{1}}\rvert+\theta_{1}\lvert A_{\phi_{1}}-\phi_{2}\rvert+2\theta_{1}\phi_2G]\\&
				\geq  (1+\theta_{1}-\lvert1-\theta_{1}\rvert)D_{\phi_{1}} -\lvert\theta_{1}B_{\phi_{1}}-\theta_{2} L_{1}\rvert\\&-\theta_{1}\lvert B_{\phi_{1}}\rvert-\theta_{2} \lvert L_{\phi_1}\rvert-2\theta_{1}\phi_2G.
			\end{split}
		\end{equation*}
		Since  
		\begin{equation*}
			(1 +\theta_{1} -\lvert1-\theta_{1} \rvert)= 2\min\{1, \theta_{1}\}
		\end{equation*} 
		and 
		\begin{equation*}
			\begin{split}
				&\lvert\theta_{1}B_{\phi_{1}}-\theta_{2} L_{\phi_1}\rvert+\theta_{1}\lvert B_{\phi_{1}}\rvert+\theta_{2} \lvert L_{\phi_1}\rvert
				\\&\leq \theta_{1}\lvert B_{\phi_{1}}\rvert+\theta_{2} \lvert L_{\phi_1}\rvert+\theta_{1}\lvert B_{\phi_{1}}\rvert+\theta_{2} \lvert L_{\phi_1}\rvert
				\\&\leq 2\max\{\theta_{1},\theta_{2}\}\lvert B_{\phi_{1}}\rvert.
			\end{split}
		\end{equation*}
		At this point, \begin{equation*}
			\begin{split}
				\theta_1(\bar{H}-\bar{F})&\geq 2\min\{1, \theta_{1}\}D_{\phi_1}-2\max\{\theta_{1},\theta_{2}\}\lvert B_{\phi_{1}}\rvert-2\theta_{1}\phi_2G\\&
				\geq 2D_{\phi_1}(\min\{1, \theta_{1}\}I-\max\{\theta_{1},\theta_{2}\}D^{-1}_{\phi_1}(\lvert B_{\phi_{1}}\rvert+\phi_2G).
			\end{split}
		\end{equation*}
		Since $\rho(D^{-1}_{\phi_1}(\lvert B_{\phi_{1}}\rvert+\phi_2G)< 1$. It is easy to verify that $\theta_1(\bar{H}-\bar{F})$ an $M$-matrix. Further, we obtain that matrix $\bar{H}-\bar{F}$ is an $M$-matrix.
	\end{proof}
	\begin{thm}
		Let $A=(M_{1}+\phi)-(N_{1}+\phi)$ be an $H$-splitting with the $H_+$-matrix $A \in \mathbb{R}^{n\times n}$, matrix $G$ a nonnegative matrix  and  $D = diag(A)$. Then the Method \ref{mthd1} is convergent,  provided that for some positive diagonal matrix $U$ such that $(\langle M_{1}+\phi\rangle -(N_{1}+\phi))U$  is a strictly diagonally dominant matrix,  one of the following conditions is valid:\\
		(1) $\phi_{2} \geq D_{\phi_{1}}$,\\
		$$(\langle M_{\phi_{1}}+\phi\rangle -\lvert N_{\phi_{1}}+\phi\rvert -\phi_{2}G)Ue > 0;$$
		(2) $0 < \phi_{2}< D_{\phi_{1}}$, $$(\phi_{2}-D_{\phi_{1}}+\langle M_{\phi_{1}}+\phi\rangle -\lvert N_{\phi_{1}}+\phi\rvert -\phi_{2}G)Ue > 0.$$ 
	\end{thm}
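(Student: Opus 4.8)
The plan is to reduce convergence to the spectral condition $\rho(L)<1$ established in Theorem \ref{thm1}, and then to bound $\rho(L)$ by a weighted $\infty$-norm that is controlled by the hypothesized positive diagonal matrix $U$ together with Lemma \ref{2.4}. First I would observe that, since $A=(M_1+\phi)-(N_1+\phi)$ is an $H$-splitting, the comparison matrix $\langle M_1+\phi\rangle-\lvert N_1+\phi\rvert$ is a nonsingular $M$-matrix; the last lemma of Section \ref{Preli} then guarantees a positive diagonal $U$ for which $(\langle M_1+\phi\rangle-\lvert N_1+\phi\rvert)U$ is strictly diagonally dominant, which is exactly the $U$ named in the statement. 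Next, exactly as in the proof of Theorem \ref{thm2}, the matrix $M_{\phi_1}+\phi_{\phi_1}+\phi_2$ is an $H_+$-matrix, so $\lvert(M_{\phi_1}+\phi_{\phi_1}+\phi_2)^{-1}\rvert\leq\bar H^{-1}$ and hence $L\leq\bar H^{-1}\bar F$ entrywise, where $\bar H=\langle M_{\phi_1}\rangle+\lvert\phi_{\phi_1}\rvert+\phi_2$ and $\bar F=\lvert N_{\phi_1}+\phi_{\phi_1}\rvert+\lvert A_{\phi_1}-\phi_2\rvert+2\phi_2 G$ are the matrices already introduced in that proof.

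Because $L\geq 0$, we have $\rho(L)=\rho(U^{-1}LU)\leq\|U^{-1}LU\|_\infty$, so it suffices to prove $\|U^{-1}\bar H^{-1}\bar F U\|_\infty<1$. I would rewrite this quantity as $\|(\bar H U)^{-1}(\bar F U)\|_\infty$ and check that $\bar H U$ is strictly diagonally dominant: $\bar H$ is a $Z$-matrix with positive diagonal, and the sdd property of $\bar H U$ is inherited from the choice of $U$ once the hypotheses on $\phi_2$ and $G$ are used. Applying Lemma \ref{2.4} with the sdd matrix $\bar H U$ and the nonnegative matrix $E=\bar F U$ yields $\|(\bar H U)^{-1}(\bar F U)\|_\infty\leq\max_{i}(\bar F U e)_i/(\langle\bar H U\rangle e)_i$. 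Since $\bar H$ is a $Z$-matrix with positive diagonal and $U>0$, the comparison matrix commutes with the scaling, $\langle\bar H U\rangle=\langle\bar H\rangle U=\bar H U$, so the desired bound is equivalent to the componentwise inequality $(\bar H-\bar F)Ue>0$.

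It then remains to show that each of conditions (1) and (2) forces $(\bar H-\bar F)Ue>0$, and this is precisely where the two cases come from. The only term of $\bar F$ whose diagonal depends on a sign is $\lvert A_{\phi_1}-\phi_2\rvert$: writing $A_{\phi_1}=D_{\phi_1}-B_{\phi_1}$, the diagonal of $A_{\phi_1}-\phi_2$ is $D_{\phi_1}-\phi_2$, so in case (1), where $\phi_2\geq D_{\phi_1}$, one has $\lvert A_{\phi_1}-\phi_2\rvert=(\phi_2-D_{\phi_1})+\lvert B_{\phi_1}\rvert$, whereas in case (2), where $0<\phi_2<D_{\phi_1}$, one has $\lvert A_{\phi_1}-\phi_2\rvert=(D_{\phi_1}-\phi_2)+\lvert B_{\phi_1}\rvert$. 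Substituting the appropriate decomposition into $(\bar H-\bar F)Ue$ and simplifying the diagonal contribution, the positive-row-sum requirement reduces, after collecting terms, to the displayed inequality in (1), respectively (2). This gives $\max_i(\bar F U e)_i/(\langle\bar H U\rangle e)_i<1$, hence $\rho(L)<1$, and convergence of Method \ref{mthd1} follows from Theorem \ref{thm1}.

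The main obstacle I anticipate is the bookkeeping in the final step: one must keep the diagonal and off-diagonal parts of each absolute-value matrix separate, confirm that $\langle\bar H U\rangle=\langle\bar H\rangle U$, and verify that the two sign regimes of $D_{\phi_1}-\phi_2$ are exactly what converts the single norm estimate into the two stated alternatives. A secondary technical point is to check that the same $U$ which makes $(\langle M_1+\phi\rangle-\lvert N_1+\phi\rvert)U$ strictly diagonally dominant also makes $\bar H U$ strictly diagonally dominant; this should follow because $\bar H=\phi_1\langle M_1+\phi\rangle+\phi_2$ dominates $\phi_1(\langle M_1+\phi\rangle-\lvert N_1+\phi\rvert)$ in the relevant comparison sense under the assumption $\phi_2\geq D_{\phi_1}$ (or $0<\phi_2<D_{\phi_1}$ together with the row-sum hypothesis in case (2)).
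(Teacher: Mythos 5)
Your proposal matches the paper's proof essentially step for step: both reduce convergence to $\rho(\bar{L})<1$ with $\bar{L}=\bar{H}^{-1}\bar{F}$ (using $\lvert(M_{\phi_{1}}+\phi_{\phi_{1}}+\phi_{2})^{-1}\rvert\leq\bar{H}^{-1}$ from Theorem \ref{thm2}), bound $\rho(\bar{L})=\rho(U^{-1}\bar{L}U)\leq\|U^{-1}\bar{L}U\|_{\infty}$ via Lemma \ref{2.4} with the hypothesized scaling $U$, and establish $(\bar{H}-\bar{F})Ue>0$ by the same two-case splitting of $\lvert A_{\phi_{1}}-\phi_{2}\rvert$ according to the sign of $D_{\phi_{1}}-\phi_{2}$, which is precisely how the paper's conditions (1) and (2) arise. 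The only nit is that your identity $\bar{H}=\phi_{1}\langle M_{1}+\phi\rangle+\phi_{2}$ should be the one-sided inequality $\bar{H}\geq\phi_{1}\langle M_{1}+\phi\rangle+\phi_{2}$ (since $\langle M_{\phi_{1}}\rangle+\lvert\phi_{\phi_{1}}\rvert\geq\langle M_{\phi_{1}}+\phi_{\phi_{1}}\rangle$), which is exactly the estimate the paper itself invokes and is all your argument needs.
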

	\begin{proof}
		Let $\bar{A}=\langle M_{1}+\phi\rangle -\lvert N_{1}+\phi\rvert $, $\phi_{1}\bar{A}=\bar{A}_{\phi_{1}}$ and $A=(M_{1}+\phi)-(N_{1}+\phi)$ be an $H$-splitting. Therefore $\bar{A}$ is a nonsingular  $M$-matrix.\\
		$$0 < \bar{A} Ue \leq \langle M_{1}+\phi\rangle Ue \leq (\langle M_{1}+\phi\rangle +\phi_{2}) Ue  $$
		and $$0 < \bar{A}Ue < \langle A\rangle Ue $$
		Now we calculate 
		\begin{equation}\label{eq8}
			\begin{split}
				&(\langle M_{\phi_{1}}\rangle+\lvert\phi_{\phi_{1}}\rvert+\phi_{2}-\lvert N_{\phi_{1}}+\phi_{\phi_{1}}\rvert-\lvert A_{\phi_{1}}-\phi_{2}\rvert-2\phi_2G)Ue\\&
				\geq 
				(\langle M_{\phi_{1}}+\phi_{\phi_{1}}\rangle-\lvert N_{\phi_{1}}+\phi_{\phi_{1}}\rvert+\phi_{2}-\lvert A_{\phi_{1}}-\phi_{2}\rvert-2\phi_2G)Ue.
			\end{split}
		\end{equation}
		Here, we consider two cases\\
		Case (1): when $\phi_{2}\geq D_{\phi_{1}}$,
		\begin{equation*}
			\begin{split}
				(\bar{A}_{\phi_{1}}+\phi_{2}-\lvert A_{\phi_{1}}-\phi_{2}\rvert-2\phi_2Q)Ue
				&= (\bar{A}_{\phi_{1}}+\phi_{2}-( \phi_{2}-\langle A_{\phi_{1}}\rangle)-2\phi_2G)Ue\\&
				\geq 2(\bar{A}_{\phi_{1}}-\phi_2G)Ue\\&
				\geq 0.
			\end{split}
		\end{equation*}
		Case (1): when $0 < \phi_{2}\leq D_{\phi_{1}}$,
		\begin{equation*}
			\begin{split}
				(\bar{A}_{\phi_{1}}+\phi_{2}-\lvert A_{\phi_{1}}-\phi_{2}\rvert-2\phi_2G)Ue
				&= (\bar{A}_{\phi_{1}}+\phi_{2}-(\lvert A_{\phi_{1}}\rvert-\phi_{2})-2\phi_2G)Ue\\&
				= (\bar{A}_{\phi_{1}}+\phi_{2}-(2D_{\phi_{1}}-\langle A_{\phi_{1}}\rangle-\phi_{2})-2\phi_2G)Ue\\&
				\geq 2(\bar{A}_{\phi_{1}}+\phi_{2}-D_{\phi_{1}}-\phi_2G)Ue\\&
				\geq 0.
			\end{split}
		\end{equation*}
		Now from case (1), case (2) and Equation$(\ref{eq8})$, we obtain that 
		$$(\langle M_{\phi_{1}}\rangle+\lvert\phi_{\phi_{1}}\rvert+\phi_{2}-\lvert N_{\phi_{1}}+\phi_{\phi_{1}}\rvert-\lvert A_{\phi_{1}}-\phi_{2}\rvert-2\phi_2G)Ue > 0.$$
		Now from Lemma $\ref{2.4}$, we have 
		\begin{equation*}
			\begin{split}
				\rho(\bar{L})&=\rho(U^{-1}\bar{L}U)\\&
				\leq \|U^{-1}\bar{L}U\|_{\infty}\\&
				=\| U^{-1}(\langle M_{\phi_{1}}\rangle+\lvert\phi_{\phi_{1}}\rvert+\phi_{2})^{-1}(\lvert N_{\phi_{1}}+\phi_{\phi_{1}}\rvert+\lvert A_{\phi_{1}}-\phi_{2}\rvert+2\phi_2G)U\|_{\infty}\\&
				\leq \max_{1\leq i \leq n}\frac{((\lvert N_{\phi_{1}}+\phi_{\phi_{1}}\rvert+\lvert A_{\phi_{1}}-\phi_{2}\rvert+2\phi_2G)Ue)_{i}}{((\langle M_{\phi_{1}}\rangle+\lvert\phi_{\phi_{1}}\rvert+\phi_{2})Ue)_{i}}\\&
				< 1.
			\end{split}
		\end{equation*}
		Hence, the Method \ref{mthd1} is convergent.
	\end{proof} 
	\section{Conclusion} 
	We introduce a class of modified new modulus-based matrix splitting methods based on a new matrix splitting approach for solving the implicit complementarity problem ICP$(q, A, \psi)$ with parameter matrices  $\phi_1$ and $\phi_2$.  These methods are able to process the large and sparse structure of the matrix $A$.  For the proposed methods, we establish some convergence conditions when the system matrix is a $P$-matrix as well as some sufficient convergence conditions when the system matrix is an $H_{+}$-matrix. 
	
	\subsection*{Acknowledgments}
	The author, Bharat Kumar is thankful to the University Grants Commission (UGC), Government of India, under the SRF fellowship, Ref. No.: 1068/(CSIR-UGC NET DEC. 2017).
	\bibliographystyle{plain} 
	\bibliography{nam}
	
\end{document}